\newtheorem{theorem}{Theorem}
\newtheorem{claim}{Claim}
\newtheorem{proposition}[theorem]{Proposition}
\newtheorem{corollary}[theorem]{Corollary}
\newtheorem{definition}[theorem]{Definition}
\newtheorem{conjecture}{Conjecture}
\newproof{proof}{Proof}
\def\Even{\mathop{\rm Even}\nolimits}
\begin{document}

\begin{frontmatter}
\title{Every triangle-free induced subgraph of the triangular lattice is $(5m,2m)$-choosable}
\author[MRS,TLN]{Yves Aubry}
\ead{yves.aubry@univ-tln.fr}
\author[TLN]{Jean-Christophe Godin}
\ead{godinjeanchri@yahoo.fr}
\author[DJN]{Olivier Togni}
\ead{olivier.togni@u-bourgogne.fr}
\address[MRS]{Institut de Mathématiques de Luminy, CNRS, Marseille, France}
\address[TLN]{Institut de Math\'ematiques de Toulon, Universit\'e du Sud Toulon-Var,  France}
\address[DJN]{Laboratoire LE2I, CNRS, Universit\'e de Bourgogne, France}



\date{\today}

\begin{abstract}
A graph $G$ is $(a,b)$-choosable if for any color list of size $a$ associated with each vertex, one can choose a subset of $b$ 
colors such that adjacent vertices are colored with disjoint color sets.
This paper proves that for any integer $m\ge 1$, every finite triangle-free induced subgraph of the triangular lattice is $(5m,2m)$-choosable.
\end{abstract}

\begin{keyword}
Radio channel assignment, triangular lattice, choosability, weighted graph.
\MSC[2010]{05C15, 05C38.}
\end{keyword}

\end{frontmatter}

\section{Introduction}
Let $G=(V(G),E(G))$ be a graph  where $V(G)$ is the set of vertices and $E(G)$ is the set of edges, and let $a$, $b$, $n$ and $e$ be integers. 

Given a list assignment (called simply a list) $L$ of $G$ {\em i.e.} a map $L : V(G) \rightarrow \mathcal{P}({\mathbb N})$ and a weight function $w$
of $G$ {\em i.e.} a map $w : V(G) \rightarrow {\mathbb N}$, an {\em $(L,w)$-coloring} $c$ of $G$ is a list 
of the weighted graph $G$ such that for all  $v\in V(G)$, 
$$c(v) \subset L(v) \text{ and } | c(v) | =w(v),$$ and for all $vv' \in E(G)$,  
$$c(v) \cap c(v')  = \emptyset.$$
We say that  $G$ is {\em $(L,w)$-colorable} if there exists an $(L,w)$-coloring $c$ of $G$.
An $(L,b)$-coloring $c$ of $G$ is an $(L,w)$-coloring such that for all $v \in V(G)$, we have $w(v)=b$.
A $a$-list $L$ of $G$ is a list of $G$ such that for all $v \in V(G)$, we have  $|L(v) |=a$.
The graph $G$ is said to be {\em $(a,b)$-choosable} if for any $a$-list $L$ of $G$, there exists an $(L,b)$-coloring $c$ of $G$.
If the graph is $(a,b)$-choosable for the $a$-list $L$ such that $L(v)=L(v')$ for all vertices $v,v'$, then $G$ is $(a,b)$-colorable.

The concept of choosability of a graph, also called list coloring, has been introduced by Vizing \cite{viz76}, and independently by Erd\H{o}s, Rubin and Taylor~\cite{erdo79}. It contains of course the colorability as a particular case. Since its introduction, choosability has been extensively studied (see for example \cite{alontuzavoigt97, borodinkostochkawoodall97, thom03, tuz96, gravier96} and more recently \cite{GutnerTarsi, hav09}).
Even for the original (unweighted) version, the problem proves to be difficult, and is NP-complete for very restricted graph classes. 

Every graph that is $(a,b)$-colorable is also trivially $(am,bm)$-colorable for any integer $m\ge 1$. For list coloring, Erd\H{o}s, Rubin and Taylor ~\cite{erdo79} conjectured the following:
\begin{conjecture}[\cite{erdo79}]
The $(a,b)$-choosability of a graph $G$ implies $(am,bm)$-choosability for all $m\in\mathbb{N}, \ m\ge 1$. 
\end{conjecture}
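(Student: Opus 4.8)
The plan is to try to build an $(am,bm)$-coloring out of $m$ separate applications of the hypothesised $(a,b)$-choosability. Given an arbitrary $(am)$-list $L$ of $G$, the most direct attempt is to split each list $L(v)$ into $m$ pairwise disjoint blocks $L_1(v),\dots,L_m(v)$, each of size $a$, to apply $(a,b)$-choosability to each block-list $L_i$ (the list $v\mapsto L_i(v)$) so as to obtain an $(L_i,b)$-coloring $c_i$, and then to set $c(v)=\bigcup_{i=1}^m c_i(v)$. Within a single vertex the blocks are disjoint by construction, so $|c(v)|=bm$; across an edge $vv'$ the sets $c_i(v)$ and $c_i(v')$ are disjoint because $c_i$ is a proper $(L_i,b)$-coloring. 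The only thing left to verify is that $c_i(v)\cap c_j(v')=\emptyset$ whenever $i\ne j$, and this is the single gap in the argument.

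First I would look for a way to close that gap by \emph{globally aligning} the blocks: assign to each pair (color $\gamma$, vertex $v$ with $\gamma\in L(v)$) a block index in $\{1,\dots,m\}$ so that at every vertex the blocks have equal size $a$ and so that the assignment is edge-consistent, meaning that adjacent vertices sharing a color place it in the same block. Under such an alignment a color could be used on both ends of an edge only by a single coloring $c_i$, which its properness forbids, and the union $c$ would be a valid $(L,bm)$-coloring. In the constant-list case $L(v)=\{1,\dots,am\}$ for all $v$ this alignment is immediate: partition $\{1,\dots,am\}$ into $m$ fixed blocks once and for all and let every vertex reuse the same partition. The argument then goes through, and it merely recovers the elementary implication ``$(a,b)$-colorable $\Rightarrow (am,bm)$-colorable'' quoted just before the conjecture.

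The hard part is precisely that for genuine list assignments no such global alignment need exist: two adjacent vertices may share many colors while having incomparable lists, and there is in general no way to slice all the lists into equal $a$-blocks so that every shared color is edge-consistent. Once the blocks at $v$ and $v'$ are misaligned the $m$ applications of $(a,b)$-choosability cease to be independent; the colorings $c_i$ interact across blocks and may re-use a color on both endpoints of an edge, so the clean union breaks down. I expect this interference to be the crux: any correct argument must \emph{couple} the $m$ instances rather than run them in parallel, for instance through a single application of choosability to a carefully built auxiliary list on an auxiliary graph, or through an inductive peeling that removes $b$ colors at a time while restoring list sizes. It should be emphasised, however, that this obstruction is not a mere technicality but a genuine barrier: the implication as stated is a long-standing problem, and counterexamples to the general statement are now known (there exists a graph that is $(4,1)$-choosable yet not $(8,2)$-choosable). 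A general proof along these lines is therefore not to be expected, and any argument that does succeed must exploit additional structure of the particular graph $G$ beyond bare $(a,b)$-choosability.
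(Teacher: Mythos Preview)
The statement you were asked to prove is presented in the paper as a \emph{conjecture} (Conjecture~1, attributed to Erd\H{o}s, Rubin and Taylor), not as a theorem; the paper offers no proof and merely cites the related Gutner--Tarsi result immediately afterwards. There is therefore no ``paper's own proof'' to compare your proposal against.

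Your write-up is not really a proof attempt but an honest analysis of why the obvious approach fails, and that analysis is correct. The decomposition of an $(am)$-list into $m$ blocks of size $a$ works for constant lists (recovering the easy $(a,b)$-colorable $\Rightarrow (am,bm)$-colorable implication) but breaks for genuine list assignments exactly where you say it does: there is no global, edge-consistent alignment of blocks, so the $m$ independently obtained $(L_i,b)$-colorings $c_i$ can collide across blocks on an edge. You are also right that this is not a fixable technicality: the conjecture has since been disproved (there is a graph that is $(4,1)$-choosable but not $(8,2)$-choosable), so no argument from bare $(a,b)$-choosability can succeed in general.

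In short, there is no gap to name because you did not claim a proof; you correctly identified the obstruction and correctly concluded that the statement, as posed, is false. The paper simply records it as an open conjecture at the time of writing.
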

In relation with this question, Gutner and Tarsi~\cite{GutnerTarsi} have recently exhibited graphs $G$ that are $(a,b)$-choosable but not $(c,d)$-choosable, with $\frac cd >  \frac ab \ge 3$.

List multicoloring problems on graphs can be used to model channel assignment problems in wireless systems. Sets of radio frequencies are to be assigned to transmitters such that adjacent transmitters are assigned disjoint sets of frequencies. Often these transmitters are laid out like vertices of a triangular lattice in a plane. This problem corresponds to the problem of multicoloring an induced subgraph of a triangular lattice with integer demands associated with each vertex. 
Since more than a decade, multicoloring of subgraphs of the triangular lattice has been the subject of many papers (see e.g.~\cite{mcd00, hav01, hav02, KT06}). Mc Diarmid and Reed~\cite{mcd00} have made the following conjecture when the subgraph is induced and contains no triangle:
\begin{conjecture}[\cite{mcd00}]
\label{c1}
Every triangle-free induced subgraph of the triangular lattice is $(\lceil\frac{9b}{4}\rceil,b)$-colorable.
\end{conjecture}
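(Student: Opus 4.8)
The plan is to combine a structural analysis of triangle-free induced subgraphs of the triangular lattice with a global rounding scheme, rather than a purely local greedy argument. First I would record the fundamental structural restriction: if a vertex $v$ belongs to a triangle-free induced subgraph $G$, then among the six lattice-neighbors of $v$ (which form a $6$-cycle in the lattice) no two consecutive ones may be present, since any two adjacent neighbors of $v$ together with $v$ would close a triangle. Hence at most three neighbors survive and $\Delta(G)\le 3$. This already rules out the naive reduction: at ratio $9/4$ a vertex of degree $2$ leaves only $\lceil 9b/4\rceil-2b=\lceil b/4\rceil<b$ free colors, so we cannot simply delete a low-degree vertex and fill it in from leftover colors. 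The argument must therefore be global.

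The second, and decisive, step is a girth lemma. Observe that $9/4=\chi_f(C_9)$, the fractional chromatic number of the $9$-cycle, while $\chi_f(C_5)=5/2$ and $\chi_f(C_7)=7/3$ both exceed $9/4$; since $(\lceil 9b/4\rceil,b)$-colorability for all $b$ forces $\chi_f(G)\le 9/4$, the conjecture can only hold if $G$ contains neither $C_5$ nor $C_7$. I would prove exactly this: a triangle-free induced subgraph of the triangular lattice has odd girth at least $9$. Because $G$ is triangle-free, any short odd cycle may be taken chordless (a chord would split it into a strictly shorter odd cycle or a triangle), so it suffices to show that no chordless $5$- or $7$-cycle embeds in the lattice without forcing an extra adjacency. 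This is a finite, rigid geometric check: realizing a closed $5$- or $7$-walk by unit lattice steps, one shows that some pair of non-consecutive vertices always lands at lattice-distance $1$, i.e. becomes adjacent, contradicting chordlessness. The extremal configuration is the induced $C_9$, which pins the ratio at $9/4$.

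With odd girth $\ge 9$, $\Delta\le 3$, and planarity in hand, the final step is to manufacture the integral $(\lceil 9b/4\rceil,b)$-coloring on a minimal counterexample $G$. I would set up a discharging argument: assign charges by Euler's formula to the faces of the planar embedding inherited from the lattice, then use triangle-freeness together with the odd-girth bound to show every face is long, so that discharging produces an unavoidable family of \emph{reducible} configurations. Reducibility here is not "color the leftover"; instead, for each configuration I would exhibit a \emph{cyclic interval} assignment --- each vertex receives an arc of $b$ colors on a circular palette of size $\lceil 9b/4\rceil$, laid out according to the canonical $9/4$ fractional coloring of a long odd cycle --- and show that these arcs can be rotated consistently across the degree-$3$ branch vertices that join the degree-$2$ paths of the skeleton determined by $\Delta\le 3$.

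The hard part will be precisely this last step. Passing from the known ratio $5/2$ (or the $7/3$ colorability bound) down to $9/4$ is exactly where the slack disappears: with only $\lceil b/4\rceil$ spare colors per adjacency constraint, the cyclic intervals surrounding different odd cycles must be reconciled simultaneously, and a single misaligned branch vertex can propagate an obstruction around the entire graph. Controlling this interaction --- essentially proving that the long-odd-girth structure of these lattice subgraphs always admits a globally consistent rotation system for the color arcs --- is the crux, and is the reason the conjecture has resisted the local exchange methods that already suffice at ratio $5/2$.
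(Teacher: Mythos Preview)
The statement you are attempting is not a theorem of the paper but Conjecture~\ref{c1}, attributed to McDiarmid and Reed and explicitly left open. The paper proves only the weaker bound $(5m,2m)$ (Theorem~\ref{theorem52}) and remarks that its handle-decomposition method, applied at ratio $9/4$, merely excludes handles of length $\ge 8$ from a minimal counterexample, with ``many configurations remain[ing] to be investigated.'' There is therefore no proof in the paper to compare your proposal against.

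As for the proposal itself, it is a strategy outline, not a proof, and the gap is exactly the one you name. Your first two steps --- $\Delta\le 3$ and the odd-girth lower bound of $9$ --- are correct and well known; they are necessary for the conjecture even to be consistent, since $\chi_f(C_5)$ and $\chi_f(C_7)$ exceed $9/4$. But the third step carries all the content and is not carried out. ``Set up a discharging argument'' and ``exhibit a cyclic interval assignment \ldots\ rotated consistently across the degree-$3$ branch vertices'' are statements of intent, not arguments: you neither specify the unavoidable set of configurations nor show any of them reducible at ratio $9/4$. Your own final paragraph concedes this, observing that with only $\lceil b/4\rceil$ slack the global consistency of the rotation system is ``the crux'' and is precisely why the problem is open. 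A proposal that identifies the hard step and then stops at it is not a proof; it is a restatement of why Conjecture~\ref{c1} is difficult.
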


Some progress have been made regarding this conjecture with Havet~\cite{hav01} proving the $(5,2)$-colorability and $(7,3)$-colorability and Sudeep and Vishwanathan~\cite{Sudeep} (with a simpler proof) the $(14,6)$-colorability of any triangle-free induced subgraph of the triangular lattice.

The main result is Theorem~\ref{theorem52} of Section~\ref{triangle} which shows the $(5m,2m)$-choosability of
triangle-free induced subgraphs of the triangular lattice. The method is similar with that
of Havet~\cite{hav01}, that uses precoloring extensions and decomposition into induced path (called handles). However,
we need here specific results on the choosability of a weighted path. We find convinient to work on a type of list of
the path that we call {\em waterfall} list. This is the subject of the next section.

\section{Waterfall lists on the path}
\label{path}
We first define the similarity of two lists with respect to a weighted graph:

\begin{definition} Let $(G,w)$ be a weighted graph. Two lists $L$ and $L'$ are said to be {\em similar} if this assertion is true:\\
$$G \text{ is }(L,w)\text{-colorable }\Leftrightarrow G  \text{ is }(L',w)\text{-colorable}.$$
\end{definition}

The \textsl{path} $P_{n+1}$ of length $n$ is the graph with vertex set $V=\{v_{0},v_{1},\dots,v_{n}\}$ and edge set
$E=\bigcup_{i=0}^{n-1} \{v_{i}v_{i+1}\}$. To simplify the notation, $L(i)$ denotes $L(v_{i})$, $c(i)$ denotes
$c(v_{i})$ and $w(i)$ denotes $w(v_i)$.\\ 

By analogy with the flow of water in waterfalls, we define a waterfall list as follows:
\begin{definition}
A {\em waterfall} list $L$ of a path $P_{n+1}$ of length $n$ is a list $L$ such that for all $i,j \in \{0,\dots,n\}$ 
with $|i-j| \geq 2$, we have $L(i) \cap L(j) = \emptyset$.
\end{definition}
Notice that another similar definition of a waterfall list is that any color is present only on one 
list or on two lists of consecutive vertices. Figure 1 shows a list $L$ of the path $P_5$ (on the left), together with a similar waterfall list $L^c$ (on the right).

\begin{definition} For a weighted path $(P_{n+1},w)$,
\begin{itemize}
\item A list $L$ is {\em good} if $|L(i)|\geq w(i) + w (i+1)$ for any $i, 1\leq i\leq n-1$.
\item The {\em amplitude} $A(i,j)(L)$ (or $A(i,j)$) of a list $L$ is $A(i,j)(L)=\cup_{k=i}^{j}L(k)$.
\end{itemize}
\end{definition}

\unitlength=0.8cm
\begin{picture}(20,8)
\put(0.1,5.3){$L( )$}
\put(1.9,7.5){$v_0$}
\put(2.7,7.5){$v_1$}
\put(3.5,7.5){$v_2$}
\put(4.3,7.5){$v_3$}
\put(5.1,7.5){$v_4$}
\put(2,7.3){\circle*{0.2}}
\put(2.8,7.3){\circle*{0.2}}
\put(3.6,7.3){\circle*{0.2}}
\put(4.4,7.3){\circle*{0.2}}
\put(5.2,7.3){\circle*{0.2}}
\put(2,7.3){\line(1,0){3.2}}

\put(1.8,6.5){1}\put(3.4,6.5){1}\put(4.2,6.5){1}\put(5,6.5){1}
\put(1.8,6){2}\put(2.6,6){2}
\put(1.8,5.5){3}\put(2.6,5.5){3}\put(3.4,5.5){3}\put(4.2,5.5){3}
\put(1.8,5){4}\put(2.6,5){4}\put(4.2,5){4}\put(5,5){4}
\put(2.6,4.5){5}\put(3.4,4.5){5}\put(5,4.5){5}
\put(1.8,4){6}\put(3.4,4){6}\put(5,4){6}
\put(3.4,3.5){7}
\put(7.5,4){\vector(1,0){1}}
\put(8.5,4){\vector(-1,0){1}}
\put(7.3,3.5){similar}
\put(10,5.3){$L^{c}( )$}

\put(11.9,7.5){$v_0$}
\put(12.7,7.5){$v_1$}
\put(13.5,7.5){$v_2$}
\put(14.3,7.5){$v_3$}
\put(15.1,7.5){$v_4$}
\put(12,7.3){\circle*{0.2}}
\put(12.8,7.3){\circle*{0.2}}
\put(13.6,7.3){\circle*{0.2}}
\put(14.4,7.3){\circle*{0.2}}
\put(15.2,7.3){\circle*{0.2}}
\put(12,7.3){\line(1,0){3.2}}

\put(11.8,6.5){1}
\put(11.8,6){2}
\put(11.8,5.5){3}\put(12.6,5.5){3}
\put(11.8,5){4}\put(12.6,5){4}
\put(11.8,4.5){5}\put(12.6,4.5){5}
\put(12.6,4){6}\put(13.4,4){6}
\put(13.4,3.5){7}
\put(13.4,3){8}
\put(13.4,2.5){9}\put(14.2,2.5){9}
\put(14.2,2){10}\put(15,2){10}
\put(14.2,1.5){11}\put(15,1.5){11}
\put(15,1){12}
\put(15,0.5){13}

\put(1,-0.5){Fig. 1. Example of a list $L$ which is similar to a waterfall list $L^c$.}
\end{picture}
\bigskip


We first show that any good list can be transformed into a similar waterfall list.
\begin{proposition}\label{similar}
\label{llf}
 For any good list $L$ of $P_{n+1}$, there exists a similar waterfall list $L^c$ with $\vert L^c(i)\vert=\vert L(i)\vert$ for all $i\in\{0,\ldots,n\}$.
\end{proposition}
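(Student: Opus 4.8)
The plan is to induct on the quantity
$\mu(L)=\sum_{x}\bigl|\{(i,j):i<j,\ j-i\ge 2,\ i,j\in S_x\}\bigr|$,
where $S_x=\{i:x\in L(i)\}$ and the sum runs over all colors $x$. By the alternative description of waterfall lists recalled above, $\mu(L)=0$ holds exactly when $L$ is a waterfall list, and then $L^c=L$ works. Every modification I make below leaves all cardinalities $|L(i)|$ fixed, hence keeps $L$ good (goodness depends only on the $|L(i)|$ and on $w$), so the induction hypothesis stays available. So assume $\mu(L)>0$ and fix a color $x$ whose support $S_x$ is not contained in any pair $\{k,k+1\}$ of consecutive indices.

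I would first dispose of the case where $S_x$ is \emph{not} an interval. Pick $m$ with $\min S_x<m<\max S_x$ and $m\notin S_x$, and set $S'=S_x\cap\{0,\dots,m-1\}$, $S''=S_x\cap\{m+1,\dots,n\}$; both are nonempty and no edge of the path joins a vertex of $S'$ to a vertex of $S''$. Let $L'$ be obtained from $L$ by renaming the occurrences of $x$ on the vertices of $S''$ to a brand-new color $x'$. Since $S'$ and $S''$ are joined by no edge, the operation that swaps $x\leftrightarrow x'$ on the vertices of $S''$ is a bijection between $(L,w)$-colorings and $(L',w)$-colorings, so $L'$ is similar to $L$; moreover $\mu(L')=\mu(L)-|S'|\,|S''|<\mu(L)$, and we conclude by induction. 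This is the routine case.

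The essential case, which I expect to be the main obstacle, is $S_x=[a,b]$ with $b\ge a+2$: here $x$ occupies a run of at least three consecutive vertices, and this is precisely where the good hypothesis is needed, since the interior vertices of the run satisfy $|L(i)|\ge w(i)+w(i+1)$. The aim is to break the run by removing $x$ from one interior vertex $v_m$ of it and putting a brand-new color there instead; the support of $x$ then ceases to be an interval, so (directly, or combined with the previous case) $\mu$ strictly decreases. What must be checked is that this surgery preserves similarity. One direction is immediate: an $(L,w)$-coloring that uses $x$ at $v_m$ uses $x$ at neither neighbour of $v_m$, so one may simply recolour $v_m$ with the new color. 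The converse is the delicate one: from an $(L',w)$-coloring that uses the new color at $v_m$ we must produce an $(L,w)$-coloring, but $v_{m-1}$ or $v_{m+1}$ may already use $x$, so we cannot just reinstate $x$ at $v_m$; instead we must re-route the colors at $v_m$, possibly propagating a correction along the run, and it is exactly the inequalities $|L(i)|\ge w(i)+w(i+1)$ that supply the spare colors to carry this out — intuitively, where these inequalities are slack the constraint imposed by the run is non-binding and the run can be broken freely, whereas where they are tight the run behaves like a chain of conflicts carried on consecutive edges, which the new colors reproduce faithfully. (One could instead bypass the surgery by first proving a closed description of $(L,w)$-colorability for good lists — a single left-to-right ``pressure'' recursion using only the $|L(i)|$, the $w(i)$ and the consecutive intersections $|L(i)\cap L(i+1)|$ — and then realizing the same recursion by an explicit waterfall ``staircase'' list of the prescribed cardinalities; either way, the heart of the matter is the treatment of a run of length at least three under the good hypothesis.)
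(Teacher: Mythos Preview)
Your proposal contains a genuine error in the ``essential case''. The surgery you propose --- removing $x$ from a single interior vertex $v_m$ of its run and inserting a brand-new color $z$ there --- does \emph{not} in general preserve similarity, even under the good hypothesis. Take $P_3$ with $w=(2,1,1)$ and $L(0)=L(1)=\{x,a\}$, $L(2)=\{x\}$. This list is good (the only condition is $|L(1)|=2\ge w(1)+w(2)=2$), and $x$ has run $[0,2]$ with sole interior vertex $m=1$. The list $L$ is not $(L,w)$-colorable: $c(0)=\{x,a\}$ is forced, whence $c(1)\subset L(1)\setminus c(0)=\emptyset$. Your surgery yields $L'(1)=\{z,a\}$, and now $c'=(\{x,a\},\{z\},\{x\})$ is an $(L',w)$-coloring. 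So $L'$ is colorable while $L$ is not; no ``propagation along the run'' can manufacture the nonexistent $(L,w)$-coloring. The underlying reason is that a fresh color at the single vertex $v_m$ discards the two-sided constraint that $x$ at $v_m$ conflicts with $x$ at both $v_{m-1}$ and $v_{m+1}$, whereas goodness only bounds $|L(m)|$ against $w(m)+w(m+1)$, i.e.\ against one side.

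The paper's transformation avoids this pitfall by splitting the run at an \emph{edge} rather than puncturing it at a vertex: it keeps $x$ on $\{i_x,i_x+1\}$ and renames $x$ to a new color $y$ on $\{i_x+2,\ldots,j_x\}$. Then in the converse direction the only possible conflict is at the single edge $v_{i_x+1}v_{i_x+2}$ (namely $x\in c'(i_x+1)$ and $y\in c'(i_x+2)$), and the good inequality $|L(i_x+1)|\ge w(i_x+1)+w(i_x+2)$ at exactly that vertex supplies a spare color to swap $x$ out of $c'(i_x+1)$ (possibly also swapping that color out of $c'(i_x)$). Applied to the counterexample above, the paper's split gives $L'(2)=\{y\}$ with $L'(0),L'(1)$ unchanged, and $L'$ remains non-colorable, as required. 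Your ``not an interval'' reduction is fine and matches the paper's first step; it is the vertex-surgery that must be replaced by an edge-split.
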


\begin{proof}
We are going to transform a good list $L$ of $P_{n+1}$  into a waterfall list $L^c$ and we will prove that $L^c$ is similar with $L$.

First, remark that if a color $x\in L(i-1)$ but $x\not\in L(i)$ for some $i$ with  $1\leq i \leq n-1$, then for any $j>i$, one can change the color $x$ by a new color 
$y\not\in A(0,n)(L)$ in the list $L(j)$, without changing the choosability of the list.
With this remark in hand, we can assume that $L$ is such that any color $x$ appears on the lists of consecutive vertices $i_x,\ldots , j_x$.

Now, by permuting the colors if necessary, we can assume that if $x<y$ then $i_x < i_y$ or $i_x =i_y$ and $j_x\leq j_y$.

Repeat the following transformation:

1. Take the minimum color $x$ for which $j_x \geq i_x +2$ i.e. the color $x$ is present on at least three vertices 
$i_x, i_x +1 ,i_x +2,\ldots , j_x$;

2. Replace color $x$ by a new color $y$ in lists $L(i_x +2),\ldots , L(j_x )$;

\noindent
until the obtained list is a waterfall list (obviously, the number of iterations is always finite).

Now, we show that this transformation preserves the choosability of the list.
Let $L'$ be the list obtained from the list $L$ by the above transformation.

If $c$ is an $(L,w )$-coloring of $P_{n+1}$  then the coloring $c'$ obtained from $c$ by changing the color $x$ by the color $y$ in the color set 
$c(k)$ of each vertex $k\geq i_x +2$ (containing $x$) is an  $(L',w )$-coloring since $y$ is a new color.

Conversely, if $c'$ is an $(L',w )$-coloring of $P_{n+1}$, we consider two cases:

\medskip
\noindent
{\sl Case 1}: $x\not\in c'(i_x +1)$ or $y\not\in c'(i_x +2)$. In this case, the coloring $c$ obtained from $c'$ by changing the color $y$ by the 
color $x$ in the color set $c'(k)$ of each vertex $k\geq i_x +2$ (containing $y$) is an  $(L,w )$-coloring.

\medskip
\noindent
{\sl Case 2}: $x\in c'(i_x +1)$ and $y\in c'(i_x +2)$. We have to consider two subcases:
\begin{itemize}
 \item Subcase 1: $L'(i_x +1 )\not\subset (c'(i_x ) \cup c'(i_x +1) \cup c'(i_x +2))$. 
There exists $z \in L'(i_x +1 )\setminus (c'(i_x ) \cup c'(i_x +1) \cup c'(i_x +2))$ and the coloring $c$ obtained from $c'$ by changing the color 
$x$ by the color $z$ in $c'(i_x +1)$ and replacing the color $y$ by the color $x$ in the color set $c'(k)$ of each vertex $k\geq i_x +2$ (containing $y$) is an  $(L,w )$-coloring.
 \item Subcase 2: $L'(i_x +1 )\subset (c'(i_x ) \cup c'(i_x +1) \cup c'(i_x +2))$. We have
$$|L'(i_x +1)| = \Big| \Big( (c'(i_x ) \cup c'(i_x +1) \cup c'(i_x +2)\Big) \cap L'(i_x +1)\Big|.  $$ 
As $c'$ is an $(L',w)$-coloring of $P_{n+1}$, we have
$$ |L'(i_x +1)|=|c'(i_x +2)  \cap L'(i_x +1)| + |c'(i_x +1)  \cap L'(i_x +1)|+\Big|\Big( c'(i_x) \backslash c'(i_x +2) \Big) \cap L'(i_x +1)\Big|,$$
$$ |L'(i_x +1)| - w (i_x +1) - |c'(i_x+2)  \cap L'(i_x +1)| = \Big|\Big( c'(i_x) \backslash c'(i_x +2) \Big) \cap L'(i_x +1)\Big|.  $$

Since $y \in c'(i_x +2)$ and $y \notin L'(i_x +1)$, we obtain that 
$$|c'(i_x +2)  \cap L'(i_x +1)| \leq w (i_x +2) - 1,$$ hence
$$ \Big( |L'(i_x +1)| - w (i_x +1) - w (i_x +2) \Big) + 1 \leq \Big| \Big( c'(i_x) \backslash c'(i_x +2) \Big) \cap L'(i_x +1)\Big|.$$
But, by hypothesis, $L$ is a good list. Thus $|L(i_x +1)|=|L'(i_x +1)| \geq w(i_x +1) + w(i_x +2)$ and 
$$ 1 \leq \Big|\Big( c'(i_x) \backslash c'(i_x +2) \Big) \cap L'(i_x +1)\Big| .$$
Consequently, there exists $z \in \Big( c'(i_x) \backslash c'(i_x +2) \Big) \cap L'(i_x +1)$. The coloring $c$ is then constructed from $c'$ by changing
the color $x$ by the color $z$ in $c'(i_x +1)$, the color $z$ by the color $x$ in $c'(i_x)$ and the color $y$ by the color $x$ in the set $c'(k)$ of each vertex $k\geq i_x +2$.
\end{itemize}
\qed
\end{proof}

Cropper et al.~\cite{CGHHJ} consider Philip Hall's theorem on systems of distinct representatives and its improvement by Halmos and Vaughan as statements about the existence of proper list colorings or list multicolorings of complete graphs. The necessary and sufficient condition in these theorems is generalized in the new setting as "Hall's condition'' : 
$$\forall H\subset G, \sum_{k\in C} \alpha(H,L,k) \ge \sum_{v\in V(H)} w(v),$$ where $C=\bigcup_{v\in V(H)}L(v)$ and $\alpha(H,L,k)$ is the independence number\footnote{the independence number of a graph is the size of the largest set of isolated vertices} of the subgraph of $H$ induced by the vertices containing $k$ in their color list. Notice that $H$ can be restricted to be a connected induced subgraph of $G$.

It is easily seen that Hall's condition is necessary for a graph to be $(L,w)$-colorable. Cropper et al.~\cite{CGHHJ} showed that the condition is also sufficient for some graphs, including paths:

\begin{theorem}[\cite{CGHHJ}]
\label{Cropper} For the following graphs, Hall's condition is sufficient to ensure an $(L,w)$-coloring:
\begin{itemize}
 \item[(a)] cliques;
 \item[(b)] two cliques joined by a cut-vertex;
 \item[(c)] paths;
 \item[(d)] a triangle with a path of length two added at one of its vertices;
 \item[(e)] a triangle with an edge added at two of its three vertices.
\end{itemize}
\end{theorem}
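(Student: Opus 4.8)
The plan is to anchor everything at case (a), which is essentially the Halmos--Vaughan theorem, and then to obtain (b)--(e) from (a) by deleting one carefully coloured vertex at a time and inducting on $|V(G)|$. For (a), fix a clique $K$ on vertex set $V$. If $H=K[S]$ for $S\subseteq V$ and $k$ is a colour, then the subgraph of $H$ spanned by the vertices whose list contains $k$ is itself a clique, so $\alpha(H,L,k)$ is $1$ when $k\in\bigcup_{v\in S}L(v)$ and $0$ otherwise; hence Hall's condition for $K$ reads $|\bigcup_{v\in S}L(v)|\ge\sum_{v\in S}w(v)$ for all $S\subseteq V$. Build the bipartite graph $B$ with one side the colour set $C=\bigcup_{v}L(v)$ and the other side the ``demand slots'' $(v,i)$, $v\in V$, $1\le i\le w(v)$, joining $k$ to $(v,i)$ iff $k\in L(v)$. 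A matching of $B$ covering all slots is the same thing as an $(L,w)$-colouring of $K$, since in a clique the sets $c(v)$ must be pairwise disjoint and distinct matched colours are automatically all distinct; and Hall's marriage theorem tells us such a matching exists exactly when $|\bigcup_{v\in S}L(v)|\ge\sum_{v\in S}w(v)$ for every $S$. This proves (a).

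For (b)--(e) I would prove, by strong induction on $|V(G)|$, that any $G$ in the list satisfying Hall's condition is $(L,w)$-colourable. In each non-clique case there is a distinguished vertex $v$ whose deletion leaves a smaller graph still covered by (a)--(e): an end-vertex of a path leaves a shorter path; the leaf $u_2$ of a triangle-plus-$P_2$ ($t_1u_1u_2$) leaves a triangle with a pendant edge, that is, two cliques ($K_3$ and $K_2$) glued at a cut-vertex, which is case (b); the leaf $u_1$ of a triangle with pendant edges $t_1u_1,t_2u_2$ again leaves a triangle with one pendant edge; and the cut-vertex $v_0$ of two glued cliques leaves the disjoint union of two cliques, handled componentwise by (a). Writing $L'$ for the list on $G-v$ with $L'(u)=L(u)\setminus c(v)$ at the neighbours $u$ of $v$ and $L'=L$ elsewhere, it suffices to choose $c(v)\subseteq L(v)$ with $|c(v)|=w(v)$ so that $(G-v,L',w)$ still satisfies Hall's condition: the induction hypothesis (or (a)) then colours $G-v$, and adjoining $c(v)$ gives an $(L,w)$-colouring of $G$, since the only edges at $v$ go to the $u$'s and there $c(u)\subseteq L'(u)$ is disjoint from $c(v)$.

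Everything thus comes down to one claim: if $(G,L,w)$ satisfies Hall's condition, then $c(v)$ can be picked so as to preserve Hall's condition on $G-v$. I would argue by contradiction. If every admissible $c(v)$ destroyed Hall's condition, then for each $c(v)$ there would be a connected induced subgraph $H'$ of $G-v$ through a neighbour $u$ of $v$ (subgraphs avoiding $N(v)$ being unaffected) with $\sum_k\alpha(H',L',k)<\sum_{x\in V(H')}w(x)$; the aim is to amalgamate these witnesses into a single connected induced subgraph $H\ni v$ of $G$ violating Hall's condition there. The bookkeeping is tractable because attaching $v$ to $H'$ changes $\sum_k\alpha$ in a way governed entirely by which colours of $L(v)$ also occur on the neighbour(s) of $v$ and whether that neighbour is forced into every maximum independent set of the corresponding colour class, so the deficiency of the worst $H'$, set against the room gained by adjoining $v$, is reconciled by a Hall/König-type selection argument for the incidence between $L(v)$ and the neighbourhood lists. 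It is exactly here that the restriction to these five classes is used: a leaf touches the rest of the graph at a single vertex, a clique cut-vertex touches each side in a complete graph, so the local effect of the amalgamation is controlled --- whereas for general $G$ Hall's condition is \emph{not} sufficient, so no argument of this kind can succeed without such structure.

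The one genuinely hard step is this colour-choosing claim. The naive estimate --- bounding the slack lost when $c(v)$ is removed from a neighbour's list by the single inequality coming from $H=H'+v$ --- is far too weak (it can forfeit up to $|L(v)|$ colours), so one must exploit Hall's condition for many subgraphs simultaneously and show that a single $c(v)$ meets all the resulting constraints at once; organising this simultaneity (by an alternating/augmenting argument, or by LP duality exploiting the interval structure of the subpaths, respectively the clique structure) is the real content of the theorem. For the path case one may, if convenient, first apply Proposition~\ref{similar} to replace $L$ by a similar waterfall list, so that every colour lives on at most two consecutive vertices, which considerably simplifies the computation of the quantities $\alpha(H',L',k)$.
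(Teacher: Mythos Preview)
The paper does not prove Theorem~\ref{Cropper}; it is quoted from \cite{CGHHJ} and used as a black box. So there is no ``paper's proof'' to compare against, and I can only assess your argument on its own terms.

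Your treatment of (a) is correct and standard. The plan for (b)--(e) --- peel off a well-chosen vertex $v$, colour it, delete $c(v)$ from the neighbouring lists, and recurse --- is the right shape, and your choice of which vertex to remove in each case is sound. But the crux, which you yourself call ``the one genuinely hard step,'' is never actually carried out: you assert that $c(v)$ can always be chosen so that Hall's condition survives on $G-v$, and you sketch a contradiction strategy (amalgamate the witnessing subgraphs $H'$ into a single $H\ni v$ violating Hall's condition in $G$), but you do not perform the amalgamation or verify a single inequality. Saying that the bookkeeping ``is tractable'' and can be organised ``by an alternating/augmenting argument, or by LP duality'' describes where a proof would go; it is not itself a proof. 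Since this claim is, by your own admission, the real content of the theorem, what you have written is an outline rather than a proof.

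There is also a concrete error in your suggested shortcut for paths. You propose to ``first apply Proposition~\ref{similar} to replace $L$ by a similar waterfall list,'' but Proposition~\ref{similar} requires $L$ to be a \emph{good} list, i.e.\ $|L(i)|\ge w(i)+w(i+1)$ for every internal vertex $i$, and Hall's condition does not imply this. For instance, on $P_3$ with $w\equiv 1$, $L(0)=L(2)=\{1\}$ and $L(1)=\{2\}$, Hall's condition holds (and the path is $(L,w)$-colourable), yet $|L(1)|=1<2=w(1)+w(2)$, so Proposition~\ref{similar} does not apply. Hence this reduction is unavailable in general, and the path case must be handled directly.
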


This result is very nice, however, it is often hard to compute the left part of Hall's condition, even for paths. However, as the next result shows, Hall's condition is very easy to check when restricted to waterfall lists.


\begin{theorem}
\label{theolistecascadechoisissable}
Let $L^{c}$ be a waterfall list of a weighted path $(P_{n+1},w)$. 
Then $P_{n+1}$ is $(L^{c},w)$-colorable if and only if:
$$\forall i,j \in \{0,\dots,n\},  \: |\bigcup_{k=i}^{j}L^{c}(k)| \geq \sum_{k=i}^{j} w (k).$$ 
\end{theorem}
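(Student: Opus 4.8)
The plan is to prove the two implications separately: the ``only if'' direction by a direct counting argument exploiting disjointness, and the ``if'' direction by reducing to Theorem~\ref{Cropper}(c) and checking that, for waterfall lists, Hall's condition collapses to the stated interval inequality.

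For the ``only if'' direction, suppose $c$ is an $(L^{c},w)$-coloring of $P_{n+1}$ and fix $i\leq j$. The key observation is that the color sets $c(i),c(i+1),\dots,c(j)$ are \emph{pairwise} disjoint: any two of them whose indices differ by at least $2$ are disjoint because the waterfall property makes the corresponding lists disjoint, and any two consecutive ones are disjoint because $c$ is a proper coloring of the edge joining them. Hence $\sum_{k=i}^{j} w(k)=\sum_{k=i}^{j}|c(k)|=|\bigcup_{k=i}^{j}c(k)|\leq |\bigcup_{k=i}^{j}L^{c}(k)|$, which is the required inequality.

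For the ``if'' direction, I would invoke Theorem~\ref{Cropper}(c): it suffices to check Hall's condition for $(P_{n+1},L^{c},w)$, and, as noted just before that theorem, it suffices to check it for connected induced subgraphs $H$. A connected induced subgraph of a path is a subpath $H=v_{i}v_{i+1}\cdots v_{j}$; for it we have $C=\bigcup_{k=i}^{j}L^{c}(k)$ and $\sum_{v\in V(H)}w(v)=\sum_{k=i}^{j}w(k)$, so Hall's inequality for $H$ reads $\sum_{k\in C}\alpha(H,L^{c},k)\geq\sum_{k=i}^{j}w(k)$. Now the point is that, by the waterfall property, for each color $k$ the set of vertices of $P_{n+1}$ whose list contains $k$ is either a single vertex or two consecutive vertices; intersecting with $V(H)$ gives at most an edge, whose induced subgraph has independence number $1$. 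Therefore $\alpha(H,L^{c},k)=1$ for every $k\in C$, so $\sum_{k\in C}\alpha(H,L^{c},k)=|C|=|\bigcup_{k=i}^{j}L^{c}(k)|$, and Hall's condition for $H$ is \emph{exactly} the hypothesis applied to the pair $(i,j)$. Since this holds for all $i,j$, Hall's condition holds and Theorem~\ref{Cropper}(c) yields an $(L^{c},w)$-coloring.

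I do not expect a genuine obstacle here once Theorem~\ref{Cropper}(c) is in hand; the crux is the identification $\alpha(H,L^{c},k)=1$, which is what makes the normally hard-to-evaluate left side of Hall's condition reduce to the cardinality $|C|$. Should one wish to avoid Theorem~\ref{Cropper}, the alternative is a direct left-to-right greedy/inductive coloring, maintaining the invariant that the residual lists on $v_{t},\dots,v_{n}$ still satisfy the interval condition; the waterfall property is precisely what keeps this invariant local, but the reduction via Theorem~\ref{Cropper}(c) is the shorter route.
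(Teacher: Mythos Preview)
Your proposal is correct and follows essentially the same route as the paper's proof: the ``if'' direction reduces to Theorem~\ref{Cropper}(c) via the observation that $\alpha(H,L^{c},k)=1$ for every color in a waterfall list, and the ``only if'' direction uses pairwise disjointness of the sets $c(i),\dots,c(j)$ to get $\sum w(k)=|\bigcup c(k)|\le |\bigcup L^{c}(k)|$. Your write-up is in fact slightly more explicit than the paper's (e.g.\ spelling out why connected induced subgraphs are subpaths and why the $c(k)$ are pairwise disjoint), but the substance is the same.
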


\begin{proof}
``if'' part: Recall that $A(i,j)=\cup_{k=i}^{j}L^c(k)$.
For $i,j \in \{0,\dots,n\}$, let $P_{i,j}$ be the subpath of $P_{n+1}$ induced by the vertices $i,\ldots, j$.
By Theorem \ref{Cropper}, it is sufficient to show that 
$$\forall i,j \in \{0,\dots,n\},  \: \sum_{x\in A(i,j)} \alpha(P_{i,j},L^c,x) \geq \sum_{k=i}^{j} w (k).$$
Since the list is a waterfall list, then for each color $x\in A(i,j)$, $\alpha(P_{i,j},L^c,x)=1$ and thus 
$\sum_{x\in A(i,j)} \alpha(P_{i,j},L^c,x) = |A(i,j)| = |\bigcup_{k=i}^{j}L^{c}(k)|$.

``only if'' part: If $c$ is an $(L^{c},w)$-coloring of $P_{n+1}$  then
$$\forall i,j \in \{0,\dots,n\} : \: \bigcup_{k=i}^{j}L^{c}(k) \supset \bigcup_{k=i}^{j} c(k).$$
Since $L^c$ is a waterfall list, it is easily seen that $|\bigcup_{k=i}^{j} c(k)|=\sum_{k=i}^{j} w(k)$. Therefore, 
$\forall i,j \in \{0,\dots,n\} : \: |\bigcup_{k=i}^{j}L^{c}(k)| \geq \sum_{k=i}^{j} w (k)$.
\qed\end{proof}

Theorem~\ref{theolistecascadechoisissable} has the following corollary when the list is a good waterfall list and 
 $|L(n)| \geq w(n)$. 

\begin{corollary}
\label{lemmeencascadeequige}
Let $L^{c}$ be a waterfall list of a weighted path $(P_{n+1},w)$ such that for any 
$i, 1\leq i\leq n-1$, $|L^{c}(i)|\geq w(i) + w (i+1)$ and
$|L^c(n)| \geq w(n)$. Then $P_{n+1}$ is  $(L^{c},w)$-colorable if and only if  
$$ \forall j \in \{0,\dots,n\}, \:|\bigcup_{k=0}^{j}L^{c}(k)| \geq \sum_{k=0}^{j} w (k). $$ 
\end{corollary}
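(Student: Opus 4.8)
The plan is to read everything off Theorem~\ref{theolistecascadechoisissable}, which says that $(P_{n+1},w)$ is $(L^{c},w)$-colorable precisely when $|A(i,j)|\ge\sum_{k=i}^{j}w(k)$ for \emph{all} pairs $0\le i\le j\le n$. The ``only if'' direction of the corollary is then immediate: if $P_{n+1}$ is $(L^{c},w)$-colorable, these inequalities hold in particular for $i=0$, and those are exactly the stated prefix inequalities. For the ``if'' direction I assume the prefix inequalities and, again invoking Theorem~\ref{theolistecascadechoisissable}, I only need to recover $|A(i,j)|\ge\sum_{k=i}^{j}w(k)$ for the remaining pairs, namely those with $i\ge1$.

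So fix $1\le i\le j\le n$. The one idea in the proof is to throw away every other list: since $L^{c}$ is a waterfall list, the lists $L^{c}(i),L^{c}(i+2),L^{c}(i+4),\dots$ are pairwise disjoint. Let $k_{0}=i<k_{1}<\cdots<k_{m}$ be those indices of the form $i+2t$, $t\ge0$, that do not exceed $j$ (so that $k_{m}\in\{j-1,j\}$ according to the parity of $j-i$). Disjointness gives
$$|A(i,j)|\;\ge\;\sum_{t=0}^{m}\bigl|L^{c}(k_{t})\bigr|.$$

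Next I bound each $|L^{c}(k_{t})|$ from below using the two size hypotheses. For every $t<m$ the index $k_{t}$ satisfies $1\le k_{t}\le j-2\le n-1$, so the good condition gives $|L^{c}(k_{t})|\ge w(k_{t})+w(k_{t}+1)$; for $t=m$, either $k_{m}=j-1\le n-1$ and $|L^{c}(k_{m})|\ge w(j-1)+w(j)$, or $k_{m}=j$ and $|L^{c}(j)|\ge w(j)$ (by the good condition when $j\le n-1$, by the hypothesis $|L^{c}(n)|\ge w(n)$ when $j=n$). Substituting, and observing that the index pairs $\{k_{t},k_{t}+1\}$ for $t<m$, together with either $\{k_{m},k_{m}+1\}$ or the leftover singleton $\{j\}$, tile the integer interval $\{i,i+1,\dots,j\}$ without overlap, the sum on the right collapses to exactly $\sum_{k=i}^{j}w(k)$. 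This yields the missing inequalities, hence colorability.

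The main (and essentially only) obstacle is spotting the ``keep every second vertex'' device; after that the computation is pure bookkeeping. The point requiring care is that the good inequality $|L^{c}(k)|\ge w(k)+w(k+1)$ may be applied only for $1\le k\le n-1$: this is exactly why the argument must be restricted to $i\ge1$, and it explains why the prefix inequalities (the case $i=0$, for which $|L^{c}(0)|$ is not controlled by the hypotheses) genuinely have to be assumed rather than derived.
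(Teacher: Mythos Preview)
Your proof is correct and follows essentially the same route as the paper: invoke Theorem~\ref{theolistecascadechoisissable} for both directions, and for the ``if'' part recover the inequalities $|A(i,j)|\ge\sum_{k=i}^{j}w(k)$ for $i\ge1$ by summing the pairwise-disjoint even-offset lists $L^{c}(i),L^{c}(i+2),\dots$ and applying the good condition termwise. The only cosmetic difference is that the paper sets the convention $w(n+1)=0$ so that $|L^{c}(n)|\ge w(n)=w(n)+w(n+1)$ and the good inequality can be applied uniformly, whereas you handle the endpoint $k_{m}$ by an explicit case split; the arguments are otherwise identical.
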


\begin{proof}
Under the hypothesis, if $P_{n+1}$ is $(L^{c},w)$-colorable, then Theorem 
 \ref{theolistecascadechoisissable} proves in particular the result.\\
 
 Conversely, since
 $L^{c}$ is a waterfall list of  $P_{n+1}$, we have:
$$\forall i,j \in \{1,\dots,n\}, \: |A(i,j)|=|\cup_{k=i}^{j} L^c(k)|  \geq |\cup_{\substack{k=i \\ k-i \ even}}^{j} L^c(k)| =\sum_{\substack{k=i \\ k-i  \ even}}^{j} |L^{c}(k)|. $$
Since $L^c$ is a good list of   $P_{n+1}$  (for simplicity, we set   $w(n+1)=0$):
$$ \forall i,j \in \{1,\dots,n\}, \: \sum_{\substack{k=i \\ k-i  \ even}}^{j} |L^{c}(k)| \geq \sum_{\substack{k=i \\ k-i  \ even}}^{j} (w (k) + w (k+1))  \geq \sum_{k=i}^{j} w (k),$$
and then we obtain for all  $ i,j \in \{1,\dots,n\}, \: |A(i,j)| \geq \sum_{k=i}^{j} w (k)$. Since for all $ j \in
\{0,\dots,n\}, \:|A(0,j)| \geq \sum_{k=0}^{j} w (k) $, Theorem \ref{theolistecascadechoisissable} concludes the proof.
\qed\end{proof}

\bigskip

Another interesting corollary holds for lists $L$ such that 
 $|L(0)|=|L(n)|= b$, and for all $i \in \{1,\dots,n-1\},  |L(i)|=a$. The function $\Even$ is defined for any real $x$ by: $\Even(x)$ is the smallest even integer $p$ such that $p\geq x$.

\begin{corollary}
\label{theorem48these}
Let $L$ be a list of $P_{n+1}$ such that $|L(0)|=|L(n)|= b$,
and $|L(i)|=a=2b+e$ for all $i \in \{1,\dots,n-1\}$ (with $e\not=0$). 
\begin{center}
If $n \geq \Even\Bigl(\frac{2b}{e}\Bigr)$ then $P_{n+1}$ is $(L,b)$-colorable.                                                                              \end{center}
\end{corollary}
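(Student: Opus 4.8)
The plan is to move to a waterfall list and then check a family of amplitude inequalities. Writing $w$ for the constant weight $b$, note first that $L$ is a good list: for $1\le i\le n-1$ we have $|L(i)|=a=2b+e\ge 2b=w(i)+w(i+1)$, using $e\ge 1$. Hence Proposition~\ref{similar} provides a similar waterfall list $L^{c}$ with $|L^{c}(i)|=|L(i)|$ for all $i$; in particular $L^{c}$ is again a good waterfall list and $|L^{c}(n)|=b\ge w(n)$. By similarity, $P_{n+1}$ is $(L,b)$-colorable if and only if it is $(L^{c},b)$-colorable, and Corollary~\ref{lemmeencascadeequige} reduces the latter to the conditions
$$\forall j\in\{0,\dots,n\},\qquad |A(0,j)(L^{c})|\ge\sum_{k=0}^{j}w(k)=(j+1)b .$$
So it is enough to verify these inequalities.

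Here I would use the key feature of a waterfall list: indices of the same parity differ by at least $2$, hence their lists are pairwise disjoint, so for any set $S\subseteq\{0,\dots,j\}$ of indices of a single parity, $|A(0,j)(L^{c})|\ge\sum_{k\in S}|L^{c}(k)|$, each interior index $k$ (with $1\le k\le n-1$) contributing $a$ and each of $0,n$ contributing $b$. For $j\le n-1$, take $S$ to be the indices in $\{0,\dots,j\}$ congruent to $j$ modulo $2$: if $j$ is odd then $S=\{1,3,\dots,j\}$ consists of $\frac{j+1}{2}$ interior indices, giving $|A(0,j)(L^{c})|\ge\frac{j+1}{2}a\ge\frac{j+1}{2}\cdot 2b=(j+1)b$; if $j$ is even then $S=\{0,2,\dots,j\}$ contributes $b$ from index $0$ and $a$ from each of the other $\frac{j}{2}$ indices, giving $|A(0,j)(L^{c})|\ge b+\frac{j}{2}a\ge b+\frac{j}{2}\cdot 2b=(j+1)b$ (with $j=0$ trivially included). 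Thus every inequality with $j\le n-1$ holds already from $a\ge 2b$.

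The remaining case $j=n$ is the one that forces the bound on $n$, because a parity-matching $S$ would have to include the endpoint $n$, whose list has size only $b$, so I would instead always take the odd indices. If $n$ is even, put $S=\{1,3,\dots,n-1\}$: all $\frac{n}{2}$ of its indices are interior, hence $|A(0,n)(L^{c})|\ge\frac{n}{2}(2b+e)=nb+\frac{ne}{2}$, which is $\ge(n+1)b$ exactly when $n\ge\frac{2b}{e}$; this holds since $n\ge\Even(\frac{2b}{e})\ge\frac{2b}{e}$. If $n$ is odd, put $S=\{1,3,\dots,n\}$: index $n$ contributes $b$ and the other $\frac{n-1}{2}$ indices contribute $a$, so $|A(0,n)(L^{c})|\ge b+\frac{n-1}{2}(2b+e)=nb+\frac{(n-1)e}{2}$, which is $\ge(n+1)b$ exactly when $n\ge 1+\frac{2b}{e}$; and since $n$ is odd while $\Even(\frac{2b}{e})$ is even, the hypothesis $n\ge\Even(\frac{2b}{e})$ in fact yields $n\ge\Even(\frac{2b}{e})+1\ge\frac{2b}{e}+1$. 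In both cases the required inequality holds, which finishes the verification and hence the proof. I expect this last step — recognizing that at $j=n$ one must abandon parity-matching in favour of the all-interior odd-indexed sum, and squeezing exactly the threshold $\Even(\frac{2b}{e})$ out of the even/odd bookkeeping — to be the main obstacle.
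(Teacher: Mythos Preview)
Your argument is correct and follows the same route as the paper: pass to a similar waterfall list via Proposition~\ref{similar}, reduce to the amplitude inequalities $|A(0,j)|\ge (j+1)b$ by Corollary~\ref{lemmeencascadeequige}, and verify these by summing $|L^{c}(k)|$ over indices of a fixed parity, with the odd-indexed sum used at $j=n$ to exploit the hypothesis $n\ge\Even(2b/e)$. The case split and the even/odd bookkeeping at $j=n$ match the paper's proof essentially verbatim.
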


\begin{proof}
The hypothesis implies that $L$ is a good list of $P_{n+1}$, hence by Proposition \ref{llf}, there exists a waterfall list $L^c$ similar to $L$. So we get:
$$\forall i \in \{1,\dots,n-1\}, \ |L^{c}(i)| \geq 2b = w (i) + w (i+1)$$
and $|L^{c}(n)| \geq b=w (n)$. By Corollary  \ref{lemmeencascadeequige} it remains to prove that: 
$$\forall j \in \{0,\dots,n\},  \:|A(0,j)| \geq \sum_{k=0}^{j} w (k) = (j+1)b.$$ 

{\sl Case 1}: $j=0$. By hypothesis, we have  $|A(0,0)|=|L^{c}(0)| \geq b$.\\

{\sl Case 2}: $j \in \{1,\dots,n-1\}$. Since $L^{c}$ is a waterfall list of $P_{n+1}$ we obtain that: \\
if $j$ is even
$$|A(0,j)| \geq \sum_{\substack {k=0 \\ k\ even}}^j |L^c(k)| = b + \sum_{\substack{k=2 \\ k  \ even}}^{j} 2b = b + \frac{j}{2} 2b = (j+1)b,$$
and if $j$ is odd
$$ |A(0,j)| \geq  \sum_{\substack {k=0 \\ k\ odd}}^j |L^c(k)| = \sum_{\substack{k=1 \\ k  \ odd}}^{j} 2b = \frac{j+1}{2} 2b =(j+1)b.  $$
Hence for all $j \in \{0,\dots,n-1\}, \ |A(0,j)| \geq  (j+1)b$.

{\sl Case 3}: $j=n$. Since $n \geq \Even\Bigl(\frac{2b}{e}\Bigr)$ by hypothesis, and 
$$\mid A(0,n) \mid \geq \sum_{\substack {k=0 \\ k\ odd}}^n |L^c(k)|=\left \{
\begin{array}{ll}
a\frac{n}{2}, & \text{ if $n$ is even}; \\
b+a\frac{n-1}{2}, & \text{ otherwise};
\end{array}
\right.$$
we deduce that $|A(0,n)| \geq (n+1)b$, which concludes the proof.
\qed\end{proof}

%

Also, we will need the following result with more restrictions on the lists of colors on the two last vertices of the path.
\begin{theorem}
\label{theorem49these}
Let $n,a,b,e$ be four integers such that  $a= 2b+e$. Let $L$ be a list of a path  $P_{n+1}$ such that $|L(0)| = b$, and
for any $i \in \{1,\dots,n-2\} : |L(i)|=a$, $|L(n-1)| =|L(n)|= b + e$, and $|A(n-1,n)(L)| \geq 2b$. 
\begin{center}
 If $n = \Even(\frac{2b}{e})$ then $P_{n+1}$ is $(L,b)$-colorable.
\end{center}
\end{theorem}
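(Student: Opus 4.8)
Here is how I would approach the proof of Theorem~\ref{theorem49these}.

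The plan is to reduce the statement to an application of Corollary~\ref{lemmeencascadeequige} to a path that has been made \emph{good}, if necessary by precoloring a leaf; the arithmetic behind every amplitude estimate will be the two inequalities $\tfrac{ne}{2}\ge b>\tfrac{(n-2)e}{2}$ that characterise $n=\Even\bigl(\tfrac{2b}{e}\bigr)$.

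If $e\ge b$ there is no precoloring to do: then $|L(n-1)|=b+e\ge 2b=w(n-1)+w(n)$, so $L$ is already a good list of $P_{n+1}$; Proposition~\ref{llf} replaces it by a similar waterfall list with the same cardinalities, and the hypotheses of Corollary~\ref{lemmeencascadeequige} hold. It then remains to check $|A(0,j)|\ge (j+1)b$ for $0\le j\le n$. Since non-consecutive lists of a waterfall list are disjoint, summing the list sizes over the indices $k\le j$ of a suitable parity settles $j\le n-2$ exactly as in the proof of Corollary~\ref{theorem48these}; the odd-indexed sum settles $j=n-1$, since it equals $(n-1)b+\tfrac{ne}{2}\ge nb$; and the even-indexed sum settles $j=n$, since it equals $nb+\tfrac{ne}{2}\ge (n+1)b$.

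Assume now $e<b$, so $L$ fails the good condition only at $v_{n-1}$. As $v_n$ is a leaf, a coloring of $v_0,\dots,v_{n-1}$ extends to $v_n$ precisely when the color set of $v_{n-1}$ meets $L(n)$ in at most $e$ colors, so it suffices to produce an $(L,b)$-coloring of $P_{n}=v_0\cdots v_{n-1}$ with that property. First I would precolor $v_n$ with a size-$b$ subset $c(n)$ of $L(n)$ removing from $L(n-1)$ as few colors as possible; writing $t=|L(n-1)\cap L(n)|$, the hypothesis $|A(n-1,n)|\ge 2b$ forces $t\le 2e$, and the best choice gives $|L(n-1)\setminus c(n)|=b+e-\max(t-e,0)$. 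On $P_n$ the resulting list $L'$ (unchanged on $v_0,\dots,v_{n-2}$, equal to $L(n-1)\setminus c(n)$ at $v_{n-1}$) is good, its only deficient vertex $v_{n-1}$ being now an endpoint, so Proposition~\ref{llf} and Corollary~\ref{lemmeencascadeequige} apply; for $j\le n-2$ the amplitude inequality is again the parity-sum argument, while for $j=n-1$ the odd-indexed sum shows it holds as soon as $|L(n-1)\setminus c(n)|\ge 2b-\tfrac{(n-2)e}{2}$. By the value of $|L(n-1)\setminus c(n)|$ above, this is equivalent to $t\le\tfrac{ne}{2}+e-b$, and $\tfrac{ne}{2}\ge b$ together with $\tfrac{(n-2)e}{2}<b$ are exactly what make this bound work out.

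The hard part is the complementary case $t>\tfrac{ne}{2}+e-b$. Here (using also $t\le 2e$ and $\tfrac{ne}{2}\ge b$) $L(n-1)$ and $L(n)$ are forced to be nearly equal, precoloring $v_n$ alone no longer works, and $v_{n-2}$ may be unavoidably forced to use a color of $L(n-1)$. I would exploit the slack $|L(n-2)|=a=2b+e$: since $L(n-1)\setminus L(n)$ then has fewer than $b$ colors, one can restrict $v_{n-2}$ so that its color set avoids $L(n-1)\setminus L(n)$ and meets $L(n-1)\cap L(n)$ in at most $e$ colors while $v_{n-2}$ still keeps a list of size at least $b$; this leaves $v_{n-1}$ a list of size at least $b$, after which the edge $v_{n-1}v_n$ can be colored because $|L(n-1)\cup L(n)|\ge 2b$. (When instead $|L(n-1)\cap L(n-2)|$ is small, one may precolor $v_{n-1}$ directly and shrink the large list $L(n-2)$.) Either way the problem is reduced to amplitude inequalities on $v_0\cdots v_{n-2}$ for a modified list, the delicate one being $j=n-2$: the crude parity-sum estimate no longer suffices there, and one must use that the waterfall transform of the modified list creates fresh colors — in just the quantity needed, once one tracks the overlaps among $L(n-2),L(n-1),L(n)$ imposed by $t$ being close to its maximum $2e$. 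Carrying out this bookkeeping is the main obstacle.
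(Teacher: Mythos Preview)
Your proof is incomplete: you yourself call the case $t>\tfrac{ne}{2}+e-b$ ``the hard part'' and end by saying that ``carrying out this bookkeeping is the main obstacle''. That case is not vacuous (take for instance $b=2$, $e=1$, $n=4$, $t=2$), and the sketch you give --- restricting the color set of $v_{n-2}$ so as to control how it meets $L(n-1)\setminus L(n)$ and $L(n-1)\cap L(n)$, then propagating the resulting deficit back through $v_0,\dots,v_{n-3}$ --- is not an argument; in particular you have not shown that the required amplitude inequality at $j=n-2$ survives once $L(n-2)$ has been shrunk, and the crude parity-sum bound, as you note, no longer suffices there.

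The source of the difficulty is your decision to precolor $v_n$ \emph{fully} with $b$ colors. The paper avoids your hard case entirely with a single move: since $|L(n)\setminus L(n-1)|\ge b-e$, fix a set $D\subset L(n)\setminus L(n-1)$ of size $b-e$, remove $D$ from $L(n)$, and \emph{lower the weight} at $v_n$ from $b$ to $e$. For the modified pair $(L',w')$ on the whole path $P_{n+1}$ one now has $|L'(n-1)|=b+e=w'(n-1)+w'(n)$ and $|L'(n)|=2e\ge w'(n)$, so $L'$ is good and Corollary~\ref{lemmeencascadeequige} applies directly; the amplitude checks at $j=n-1$ and $j=n$ both reduce to the single inequality $\tfrac{ne}{2}\ge b$, with no dependence on $t$ whatsoever. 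Any $(L',w')$-coloring is then completed to an $(L,b)$-coloring by adding $D$ back to the color set of $v_n$, which is safe because $D\cap L(n-1)=\emptyset$. The point is that reserving $b-e$ ``private'' colors for $v_n$ and deferring the remaining $e$ colors makes the list good without any case split on the overlap $t$.
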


\begin{proof}
Since
$$|A(n-1,n)|=|L(n-1)|+|L(n) \backslash L(n-1)| = (b+e)+|L(n) \backslash L(n-1)| \geq 2b,$$
we obtain that $|L(n) \backslash L(n-1)| \geq b-e$. Let $D$ be a set such that 
$D \subset L(n) \backslash L(n-1) \ and \ |D|=b-e$.

Let $L'$  be the new list constructed with  $L$ such that $L'(i) =L(i)$ if $i \in \{0,\dots,n-1\}$ and $L'(n)= L(n)
\backslash D$ and let $w'$ be a new weight function defined by $w'(i) =b$ if $i \in \{0,\dots,n-1\}$and $w'(n)=e$.

We are going to prove that if $P_{n+1}$ is $(L',w')$-colorable then $P_{n+1}$ is $(L,b)$-colorable.
Indeed, if $c'$ is an $(L',w')$-coloring of $ P_{n+1}$, then we construct $c$ such that:
$$c(i) = \left \{
\begin{array}{ll}
c'(i), & \text{if } i \in \{0,\dots,n-1\}; \\
c'(n) \cup D, & \text{otherwise}.
\end{array}
\right.
$$
Since $D \cap L(n-1) = \emptyset$, we have $c(n-1) \cap c(n) = \emptyset$ and then $c$ is an $(L,b)$-coloring of $
P_{n+1}$.\\
Now, this new list   $L'$ is a good list  of $P_{n+1}$ and $|L'(n)| \geq w'(n)$. 
Proposition \ref{llf} shows that there exists a waterfall list $L^c$ similar to $L'$ such that for all $k$ we have 
 $|L^c(k)|=|L'(k)|$. 
 
Thanks to Corollary \ref{lemmeencascadeequige}, it remains to check that:
$$ \forall j \in \{0,\dots,n\} : \:|A(0,j)(L^{c})| \geq \sum_{k=0}^{j} w' (k). $$

{\sl Case 1}: $j \in \{0,\dots,n-2\}$. Since the list is a waterfall list, we have: 
$$ |A(0,j)(L^{c})| \geq \left \{
\begin{array}{ll}
\sum_{\substack {k=0 \\ k\ even}}^j |L^c(k)|= b+a\frac{j}{2}, &  \text{if $j$ is even}; \\
 \sum_{\substack {k=0 \\ k\ odd}}^j |L^c(k)| = a\frac{j+1}{2}, &  \text{otherwise};
\end{array}
\right.
$$
and the weight function satisfies $\sum_{k=0}^{j} w' (k)=(j+1)b$. Hence, we deduce that 
$$|A(0,j)(L^{c})| \geq \sum_{k=0}^{j} w' (k).$$

{\sl Case 2}: $j=n-1$. Since the list is a waterfall list, we have: 
$$ |A(0,n-1)(L^{c})| \geq \sum_{\substack {k=0 \\ k\ odd}}^{n-1} |L^c(k)|=(b+e)+a\frac{n-2}{2},$$ 
and $\sum_{k=0}^{n-1} w' (k)=nb$. Then $b+e+a\frac{n-2}{2} \geq nb$ if and only if $\frac{en}{2} \geq b$, which is
true by hypothesis since  $n = \Even(\frac{2b}{e})$, thus
$$|A(0,n-1)(L^{c})| \geq \sum_{k=0}^{n-1} w' (k).$$

{\sl Case 3}: $j=n$. Since the list is a waterfall list, we have: 
$$ |A(0,n)(L^{c})| \geq  \sum_{\substack {k=0 \\ k\ even}}^n |L^c(k)|=b+2e+a\frac{n-2}{2},$$
and $\sum_{k=0}^{n} w' (k)=nb+e$. Then $b+2e+a\frac{n-2}{2} \geq nb+e$ if and only if $\frac{en}{2} \geq b$, which
is true by hypothesis since $n = \Even(\frac{2b}{e})$, thus
$$|A(0,n)(L^{c})| \geq \sum_{k=0}^{n} w' (k).$$
\qed\end{proof}

\section{Choosability of the triangular lattice}
\label{triangle}
Let $\mathcal{R}$ be a finite triangle-free induced subgraph of the triangular lattice.
Recall that the triangular lattice is embedded in an Euclidian space and that any vertex $(x,y)$ of $\mathcal{R}$ has at
most six neighbors: 
{its neighbor on the left} $(x-1,y)$, {its neighbor on the right} $(x+1,y)$, {its neighbor on the top left}
$(x-1,y+1)$, {its neighbor on the top right} $(x,y+1)$, {its neighbor on the bottom left}
$(x,y-1)$ and {its neighbor on the bottom right} $(x+1,y-1)$.\\

Follwing the terminology used in~\cite{hav01}, we define nodes and handles of $\mathcal{R}$.
\begin{definition}
The nodes of $\mathcal{R}$ are the vertices of degree  $3$. There are two kinds of nodes: the left nodes whose neighbors
are the neighbors on the left, on the top right, and on the bottom right; and the right nodes whose
neighbors are the neighbors on the right, on the top left and on the bottom left.
\end{definition}

\begin{center}
\unitlength=0.6cm
\begin{picture}(20,4)(0,0)
\put(6.5,2.5){\circle*{0.3}}
\put(7.5,4){\circle*{0.3}}
\put(7.5,1){\circle*{0.3}}
\put(5,2.5){\circle*{0.3}}
\put(13.5,2.5){\circle*{0.3}}
\put(15,2.5){\circle*{0.3}}
\put(12.5,1){\circle*{0.3}}
\put(12.5,4){\circle*{0.3}}
\put(5,2.5){\line(1,0){1.5}}
\put(6.5,2.5){\line(2,3){1}}
\put(6.5,2.5){\line(2,-3){1}}
\put(13.5,2.5){\line(1,0){1.5}}
\put(13.5,2.5){\line(-2,3){1}}
\put(13.5,2.5){\line(-2,-3){1}}
\put(4.8,0){Fig. 2. Left node and right node. }
\end{picture}
\end{center}

\begin{definition}
A {\em cutting node} of  $\mathcal{R}$ is a left node   $(x,y)$ such that for any node   $(x',y')$, we have $y \geq y'$, and
for any left node  $(x',y)$, we have $x' \leq x$. A {\em handle} $H$ of $\mathcal{R}$ is a subpath of $\mathcal{R}$ such that its extremal
vertices are nodes and its internal vertices have degree 2. The set of the internal vertices of a handle $H$ is denoted by $\dot{H}$.

A cutting handle of $\mathcal{R}$  is a handle such that one of its extremal vertices is the cutting node $(x,y)$ and
one of its internal vertices is $(x,y+1)$. 
\end{definition}

One can view the cutting node as the left node the most on the top on the right.

We have the following claim:

\begin{claim}
\label{lemme22these}
Let $H$ be a cutting handle of  $\mathcal{R}$ such that $V(H)=\{v_0,\dots,v_n\}$. If the length $n$ of
$H$ is less than or equal to  $3$, then $n=3$ and $v_{3}$ has a neighbor  $v_4 \not= v_{2} $ of degree less than or
equal to  $2$.
\end{claim}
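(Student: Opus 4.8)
The plan is to argue by case analysis on the local structure of the triangular lattice around the cutting handle $H$, exploiting the extremal choice of the cutting node. Let $v_0 = (x,y)$ be the cutting node (a left node) and $v_1 = (x,y+1)$, the prescribed first internal vertex of the cutting handle, which is the neighbor of $v_0$ on the top right. First I would record what the cutting node condition gives us: no node of $\mathcal{R}$ lies strictly above row $y$, and among left nodes in row $y$, $v_0$ is rightmost. In particular, $v_1 = (x,y+1)$ lies on row $y+1$, hence $v_1$ is not a node (it could still have degree 2, which is exactly what being an internal vertex of a handle requires). Since $v_1$ has degree $2$, its two neighbors inside $\mathcal{R}$ are $v_0$ and exactly one more vertex, which must be $v_2$.

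Next I would chase the handle forward from $v_1$. The neighbors of $v_1=(x,y+1)$ in the lattice are $(x-1,y+1)$, $(x+1,y+1)$ (same row), $(x-1,y+2)$, $(x,y+2)$ (row above), $(x,y)=v_0$ and $(x+1,y)$ (row below). Since $v_1$ has degree $2$ and $v_0$ is one neighbor, $v_2$ is one of the other five candidates. The triangle-free hypothesis forbids $v_2$ from being simultaneously adjacent to $v_0$; this rules out $(x+1,y)$ (which together with $v_0,v_1$ would form a triangle) and also constrains the same-row options. The key point to extract is that in all surviving cases $v_2$ lies in row $y+1$ or higher — but no node sits strictly above row $y$, and row $y$ contains no vertex of the handle past $v_0$ except possibly via $v_0$ itself — so as long as $v_2$ is internal (degree $2$) we can continue, and when we reach the other extremal node it must be forced back down to row $y$ (the only row allowed to contain nodes that are $\le y$), specifically to the rightmost-left-node side. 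I would then count rows: going up from $v_0$ to $v_1$ and needing to come back down to a node on row $\le y$ forces the handle to have length at least $3$; combined with the hypothesis $n\le 3$ this gives $n=3$.

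For the final assertion — that $v_3$ has a neighbor $v_4\neq v_2$ of degree $\le 2$ — I would argue from the fact that $v_3$ is a node (degree $3$) sitting on row $y$, and use the extremality again: because $v_0$ is the rightmost left node on the top row, $v_3$ cannot itself be a left node of the same "generation" to the right, so one of its three neighbors lies on a row $< y$ or is otherwise forced to have degree $\le 2$ by the triangle-free condition and the absence of nodes above row $y$. Concretely, the three neighbors of a node $v_3$ other than $v_2$ split into one more node-candidate and one forced degree-$\le 2$ vertex; ruling out the node-candidate via the cutting-node extremality leaves the desired $v_4$.

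The main obstacle I anticipate is the bookkeeping in the forward chase: enumerating the admissible positions of $v_2$ (and then $v_3$) modulo the triangle-free constraint and the left-node/right-node dichotomy, and checking in each branch that the cutting-node extremality is genuinely violated unless the handle closes up exactly at length $3$ with the stated neighbor. This is a finite but somewhat delicate case split over the six lattice directions at each step; the rest (the length count and the final neighbor existence) should follow cleanly once the admissible configurations are pinned down.
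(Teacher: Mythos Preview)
The paper does not actually supply a proof of this claim: it states the claim, draws Figure~3 illustrating the length-$3$ configuration, and moves directly on to Theorem~\ref{theorem52}. So there is nothing to compare against except the implicit expectation that the reader can verify it from the local geometry.

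Your plan is the right one and, once the case split is carried through, it works. A few places where your sketch is looser than it needs to be:

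\begin{itemize}
\item You should use explicitly that in a triangle-free induced subgraph of the triangular lattice every vertex has degree at most $3$, and every degree-$3$ vertex is either a left node or a right node (the six lattice neighbours of a vertex form a $6$-cycle, and triangle-freeness forces the present neighbours to be an independent set in that cycle). This is what makes the dichotomy clean.
\item From $v_0=(x,y)$ a left node you get immediately that $(x-1,y+1)$ and $(x+1,y)$ are absent, so the second neighbour $v_2$ of $v_1=(x,y+1)$ lies in $\{(x+1,y+1),(x-1,y+2),(x,y+2)\}$, all on row $\ge y+1$. If $v_2$ sits on row $y+2$, every neighbour of $v_2$ is on row $\ge y+1$ and hence cannot be a node; this already forces $n\ge 4$. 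So under $n\le 3$ you must have $v_2=(x+1,y+1)$, and then the only neighbour of $v_2$ on row $\le y$ is $(x+2,y)$, giving $v_3=(x+2,y)$ and $n=3$.
\item For the last part, $v_3=(x+2,y)$ has $v_2$ as its top-left neighbour, so $v_3$ is a right node with remaining neighbours $(x+3,y)$ and $(x+2,y-1)$. The vertex $(x+3,y)$ has $v_3$ as its left neighbour; it cannot be a right node (a right node has no left neighbour in $\mathcal R$) and it cannot be a left node either, since that would be a left node on row $y$ strictly to the right of $v_0$, contradicting the cutting-node choice. Hence $(x+3,y)$ has degree $\le 2$ and serves as $v_4$.
\end{itemize}

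Your paragraph about ``one of its three neighbours lies on a row $<y$ or is otherwise forced to have degree $\le 2$'' is the only genuinely soft spot: it is not the row that does the work for $v_4$, it is the left/right-node dichotomy combined with the extremality of $v_0$ among left nodes on row $y$. Once you replace that sentence with the argument above, the proof is complete.
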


\begin{center}
\unitlength=0.7cm
\begin{picture}(18,5)(0,0)
\put(6.5,2.5){\circle*{0.3}}
\put(7.5,4){\circle*{0.3}}
\put(9,4){\circle*{0.3}}
\put(10,2.5){\circle*{0.3}}
\put(11.5,2.5){\circle*{0.3}}
\put(9,1){\circle*{0.3}}
\put(7.5,1){\circle*{0.3}}
\put(5,2.5){\circle*{0.3}}
\put(6.5,2.5){\line(2,3){1}}
\put(6.5,2.5){\line(2,-3){1}}
\put(7.5,4){\line(1,0){1.5}}
\put(7.5,1){\line(1,0){1.5}}
\put(10,2.5){\line(1,0){1.5}}
\put(9,1){\line(2,3){1}}
\put(10,2.5){\line(-2,3){1}}
\put(5,2.5){\line(1,0){1.5}}
\put(6.2,2.9){$v_{0}$}
\put(7.2,4.4){$v_{1}$}
\put(8.8,4.4){$v_{2}$}
\put(9.9,2.9){$v_{3}$}
\put(11.4,2.9){$v_{4}$}
\put(4.2,0){Fig. 3. Cutting handle of length $3$.}
\end{picture}
\end{center}

\begin{theorem}
\label{theorem52}
For any integer $m\ge 1$, every finite triangle-free induced subgraph of the triangular lattice is $(5m,2m)$-choosable.
\end{theorem}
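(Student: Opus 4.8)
The plan is to prove Theorem~\ref{theorem52} by induction on the number of vertices of $\mathcal{R}$, using a precoloring-extension argument along handles, exactly in the spirit of Havet~\cite{hav01}. Throughout, set $a=5m$, $b=2m$, so $a=2b+m$, i.e.\ the ``excess'' $e$ equals $m$; note $\Even(2b/e)=\Even(4)=4$. Fix a $5m$-list $L$ on $\mathcal{R}$. If $\mathcal{R}$ has a vertex of degree at most~$2$, or is disconnected, we can peel it off or split it and finish by induction (a degree-$2$ vertex keeps $2m$ of its $5m$ colors avoiding the $\le 4m$ forbidden ones). So we may assume $\mathcal{R}$ is connected and has minimum degree $\ge 3$; since it is triangle-free and a subgraph of the triangular lattice, every vertex then has degree exactly~$3$, and in fact the structure forces the existence of a \emph{cutting handle} $H=v_0v_1\cdots v_n$ in the sense defined above.

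The core of the argument is then a case analysis on the length $n$ of the cutting handle. If $n\ge 4$, remove the internal vertices $\dot H=\{v_1,\dots,v_{n-1}\}$; by induction, $\mathcal{R}\setminus\dot H$ has an $(L,2m)$-coloring $c$. This colors $v_0$ and $v_n$, leaving on each internal vertex $v_i$ a residual list of size $5m$ minus the at most $2m$ colors used by its unique neighbor outside $H$; so $v_1,\dots,v_{n-1}$ carry lists of size $\ge 3m$, and we must $2m$-color the path $v_0v_1\cdots v_n$ where the two ends are \emph{already} colored with $2m$ colors. Formally, give $v_0,v_n$ weight $2m$ with their singleton color-set lists and $v_1,\dots,v_{n-1}$ weight $2m$ with lists of size $\ge 3m=2b+e$ with $e=m$; this is a good list, and since $n\ge 4=\Even(2b/e)$, Corollary~\ref{theorem48these} (applied after discarding excess colors down to sizes $b,a,\dots,a,b$) yields the extension. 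For the boundary case $n=3$ we invoke Claim~\ref{lemme22these}: $v_3$ has a further neighbor $v_4\ne v_2$ of degree $\le 2$. Delete $v_1,v_2,v_3,v_4$ (or the appropriate subset), color the rest by induction, and extend along the path $v_0v_1v_2v_3v_4$ using the refined Theorem~\ref{theorem49these}, whose hypotheses ($|L(n-1)|=|L(n)|=b+e$, $|A(n-1,n)|\ge 2b$, $n=4=\Even(2b/e)$) are exactly tailored to this configuration — the weaker lists on $v_3,v_4$ reflect that these two vertices see neighbors outside the handle. Cases $n\le 2$ are impossible or trivial: a cutting handle of length $\le 2$ would force a triangle or a low-degree vertex, contradicting our reductions.

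The deletion-and-extension step must be checked to actually reduce the vertex count and to leave the residual lists with the claimed sizes; this is where one must be careful that the neighbors of $\dot H$ outside $H$ are distinct from $v_0,v_n$ and from each other (true because internal handle vertices have degree~$2$ and $\mathcal{R}$ is triangle-free), so each internal vertex loses at most $2m$ colors. One also has to observe that the coloring of $\mathcal{R}\setminus\dot H$ together with the path-extension is a proper coloring of all of $\mathcal{R}$: the only new adjacencies to worry about are internal-to-internal and internal-to-endpoint edges of $H$, all handled by the path coloring, plus the internal-to-outside edges, handled by having deleted those colors from the residual lists.

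The main obstacle is the boundary case $n=3$ (and making Claim~\ref{lemme22these} do its work): here the naive count gives only a path of length $3<\Even(4)$, which is one too short for Corollary~\ref{theorem48these}, so we genuinely need the extra low-degree neighbor $v_4$ furnished by the claim, and we need the sharper Theorem~\ref{theorem49these} with its extra amplitude hypothesis $|A(n-1,n)|\ge 2b$ on the last two vertices. Verifying that the residual lists on $v_3$ and $v_4$ satisfy $|L(v_3)|,|L(v_4)|\ge b+e=3m$ and $|A(v_3,v_4)|\ge 4m$ — using that $v_3$ and $v_4$ each have exactly one already-colored neighbor and that those two neighbors may or may not coincide — is the delicate combinatorial bookkeeping that makes the whole induction close. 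Everything else is a routine application of the waterfall-list machinery of Section~\ref{path}.
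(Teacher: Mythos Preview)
Your overall architecture matches the paper's --- minimal counter-example, cutting handle, Corollary~\ref{theorem48these} for long handles, Theorem~\ref{theorem49these} for the length-$3$ case --- but there is a genuine arithmetic error in your opening reduction that makes the argument collapse. You write that a degree-$2$ vertex ``keeps $2m$ of its $5m$ colors avoiding the $\le 4m$ forbidden ones''; but $5m-4m=m$, not $2m$, so degree-$2$ vertices \emph{cannot} be peeled off greedily. Consequently your assumption that the minimal counter-example has minimum degree $\ge 3$ is unjustified, and in fact it is self-defeating: if every vertex had degree $3$ then every vertex would be a node, handles would have no internal vertices, and there would be nothing to remove. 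The paper does not reduce away degree-$2$ vertices; its base cases are graphs of maximum degree $\le 1$ and disjoint unions of cycles, and otherwise the minimal counter-example is shown to possess a cutting handle whose internal vertices \emph{are} the degree-$2$ vertices you tried to discard. (Relatedly, your remark that internal handle vertices have ``a unique neighbor outside $H$'' is off: by definition they have degree $2$ and both neighbors lie on $H$, so their residual lists stay at full size $5m=a$.)

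There is a second, subtler gap in your $n=3$ case. You propose to delete $\{v_1,v_2,v_3,v_4\}$, color the rest by induction, and then verify $|A(v_3,v_4)|\ge 4m$ from the residual lists $L(v_3)\setminus c(u)$ and $L(v_4)\setminus c(v_5)$. But knowing only that each of these has size $\ge 3m$ does not force their union to have size $\ge 4m$; indeed they could coincide. The paper avoids this by a two-step manoeuvre: it first colors $G-\dot H$ (so $v_3$ and $v_4$ receive colors $c(v_3),c(v_4)$ with $c(v_3)\cap c(v_4)=\emptyset$), and then \emph{restricts} that coloring to $G-(\dot H\cup\{v_3,v_4\})$. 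Now $c(v_3)\subset L(v_3)\setminus c(u)$ and $c(v_4)\subset L(v_4)\setminus c(v_5)$ witness $|A(v_3,v_4)|\ge |c(v_3)\cup c(v_4)|=4m$, and Theorem~\ref{theorem49these} applies. Your ``delicate bookkeeping'' remark correctly flags this spot, but the bookkeeping only closes if you color $v_3,v_4$ first and then uncolor them; coloring the smaller graph $G\setminus\{v_1,v_2,v_3,v_4\}$ directly does not suffice.
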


\begin{proof}
Set $a=5m$, $b=2m$ and $e=m$.
Any graph with only vertices of degree 0 or 1 is $(2m,m)$-choosable. Also, a graph having only vertices of degree 2 is a union of cycles. Cycles of even length are $(2m,m)$-choosable and cycles of odd length $\ell>3$ are $(5m,2m)$-choosable~\cite{voi98}.

Let $G$ be a minimal (with respect to the number of vertices) counter-example. 
Since $G$ is a triangle-free induced subgraph of the triangular lattice, its girth\footnote{the length of a shortest (simple) cycle in the graph} is at least 6 and hence it has at least two nodes.
Therefore, by symmetry, $G$ has a cutting handle $H$ (otherwise, consider its mirror graph) and for any $5m$-list
$L$ of $G$, there exists an $(L,2m)$-coloring $c$ of $G-\dot{H}$.
If $H$ is of length $n\ge 4=\Even\Bigl(\frac{4m}{m}\Bigr)$, by Corollary \ref{theorem48these}, $c$ can be extended to an $(L,2m)$-coloring of $G$, contradicting the hypothesis.
Otherwise, by Claim~\ref{lemme22these}, $H$ has length $3$ and $v_3$ has a neighbor $v_4\ne v_2$ of degree two (let $v_5$ be the other neighbor of $v_4$). Hence we have $|L(v_4) \setminus c(v_5)|\ge 5m-2m=3m=b+e$, $|L(v_3)|=5m\ge b+e$ and $|L(v_3) \cup L(v_4)\setminus c(v_5)| \ge |c(v_3 ) \cup c(v_4)| = 4m=2b$.
Then, $H$ satisfies the conditions of Theorem~\ref{theorem49these} and the coloring $c$ restricted to $G-(\dot{H} \cup \{v_3,v_4\})$ can be extended to an $(L,2m)$-coloring of $G$, contradicting the hypothesis.
\qed\end{proof}

Notice that the above proof can be easily translated to show the more general result that for any integers $a,b$ such that $a/b\ge 5/2$, every finite triangle-free induced subgraph of the triangular lattice is $(a,b)$-choosable.

Also, using similar arguments with some additional results on the list-colorability of the path~\cite{god09}, allows us to show that triangle-free induced subgraphs of the triangular lattice are $(7,3)$-colorable, thus giving another proof of Havet's result.

Moreover, the method defined in this paper can serve as a starting tool in order to prove Conjecture~\ref{c1}. Proceeding as in the proof of Theorem~\ref{theorem52} but with $a=9$ and $b=4$ (and $e=a-2b = 1$), allows us to prove that a minimal counter-example to the conjecture does not contain handles of length $n\ge 8$. By using more complex structures than handles, some additional properties of a minimal counter-example were found by Godin~\cite{god09}. However, many configurations remain to be investigated in order to prove the conjecture.


\end{document}